\documentclass[12pt]{amsart}

\usepackage{amsthm}
\usepackage{amsmath}
\usepackage{mathrsfs}
\usepackage{amssymb}
\usepackage[dvipsnames]{xcolor}
\usepackage{tikz-cd}
\usepackage{enumitem}
\usepackage{hyperref}
\hypersetup{colorlinks = true,
	linkcolor = MidnightBlue,
	urlcolor = BrickRed,
	citecolor = MidnightBlue}

\title[Pro-representability of $K^M$-cohomology in weight 3]{Pro-representability of $K^M$-cohomology in weight 3 generalizing a result of Bloch}
\author{Eoin Mackall}
\email{eoinmackall \emph{at} gmail.com}
\urladdr{\url{www.eoinmackall.com}}
\date{\today}
\keywords{$K$-cohomology; pro-representability}
\subjclass[2020]{19E08; 14D15}

\newtheorem{thm}{Theorem}[section]

\newtheorem{lem}[thm]{Lemma}

\theoremstyle{definition}
\newtheorem{defn}[thm]{Definition}
\newtheorem{exmp}[thm]{Example}
\newtheorem{rmk}[thm]{Remark}

\newcounter{item}

\newcommand{\HH}{\mathrm{H}}

\newcommand{\Spec}{\mathrm{Spec}}
\newcommand{\R}{\mathcal{R}}

\begin{document}
\maketitle
\begin{abstract}
We generalize a result, on the pro-representability of Milnor $K$-cohomology groups at the identity, that's due to Bloch. In particular, we prove, for $X$ a smooth, proper, and geometrically connected variety defined over an algebraic field extension $k/\mathbb{Q}$, that the functor \[\mathscr{T}_{X}^{i,3}(A)=\ker\left(\HH^i(X_A,\mathcal{K}_{3,X_A}^M)\rightarrow \HH^i(X,\mathcal{K}_{3,X}^M)\right),\] defined on Artin local $k$-algebras $(A,\mathfrak{m}_A)$ with $A/\mathfrak{m}_A\cong k$, is pro-representable provided that certain Hodge numbers of $X$ vanish.
\end{abstract}

\section{Introduction}
Bloch \cite{MR371891} has shown that the Milnor $K$-cohomology $\HH^i(X,\mathcal{K}^M_{2,X})$, for a variety $X$ defined over a number field $k$, is an interesting object amenable to study by deformation theory. More precisely, Bloch found a necessary and sufficient criterion for determining pro-representability of the functor which sends a local Artinian $k$-algebra $A$ to the kernel of the canonical map \[\HH^i(X_A,\mathcal{K}^M_{2,X_A})\rightarrow \HH^i(X,\mathcal{K}^M_{2,X})\] in terms of the vanishing of some Hodge numbers of $X$. Recall \cite{MR217093} that a functor $F:\mathsf{Art}_k\rightarrow \mathsf{Set}$ from the category $\mathsf{Art}_k$ of local Artinian $k$-algebras $(A,\mathfrak{m}_A)$ with residue field $A/\mathfrak{m}_A\cong k$ is called \textit{pro-representable} if there is a complete local Noetherian $k$-algebra $(R,\mathfrak{m}_R)$ such that: \begin{enumerate}[nosep, leftmargin=*]\item $(R/\mathfrak{m}_R^n,\mathfrak{m}_R/\mathfrak{m}_R^n)$ is an object in $\mathsf{Art}_k$ for all integers $n\geq 1$, 
\item there exists a canonical isomorphism of functors $F\simeq h_R$ between $F$ and the functor $h_R:\mathsf{Art}_k\rightarrow \mathsf{Set}$ characterized by the assignment $h_R(A)=\mathrm{Hom}_{\text{local } k-\text{alg}}(R,A)$ for any object $(A,\mathfrak{m}_A)$ of $\mathsf{Art}_k$.\end{enumerate}

A key ingredient necessary for the proof of Bloch's criterion is the existence of a certain logarithmic comparison isomorphism between the kernel of the restriction map $K_2(B)\rightarrow K_2(A)$, for a local $\mathbb{Q}$-algebra $A$ and an $A$-algebra $B$ with $A$-augmentation $\rho:B\rightarrow A$ so that the kernel $J=\mathrm{ker}(\rho)$ is nilpotent, and a certain subquotient of the group of absolute K{\"a}hler differentials for $B$ (this was also proved in \cite{MR371891}).

Since \cite{MR371891}, there have been some attempts at generalizing Bloch's results in different directions. To name some of the successes: Stienstra \cite{MR718076} generalized the pro-representability criterion to surfaces defined over more general base fields; Maazen and Stienstra \cite{MR472795} provided an explicit presentation for $K_2(R)$ for a large class of rings $R$, allowing them to recover the relation to differentials provided by Bloch; and, more recently, Bloch's logarithmic comparison isomorphism has been extended to higher Milnor $K$-groups by both Gorchinskiy and Tyurin \cite{MR3859378}, and independently Dribus \cite{https://doi.org/10.48550/arxiv.1402.2222}.

In this note, we show how the theorems of Gorchinskiy and Tyurin, and Dribus, sheafify to allow study of pro-representability for functors that are related to the higher Milnor $K$-cohomology groups $\HH^i(X,\mathcal{K}^M_{n,X})$. Our main result is a sufficiency criterion for pro-representability in the case of weight 3 Milnor $K$-cohomology. That is to say, we prove that for a variety $X$ satisfying a host of assumptions (specifically, $X$ should be very nice geometrically, defined over an algebraic extension $k/\mathbb{Q}$, and have some vanishing Hodge numbers), the functor which assigns to an Artin local $k$-algebra $(A,\mathfrak{m}_A)$ with residue field $A/\mathfrak{m}_A\cong k$ the group $\HH^i(X_A,\mathcal{K}^M_{3,X_A})$ is pro-representable at the identity (Theorem \ref{thm: Bloch3}). Our proof is also descriptive, like Bloch's, in the sense that we prove pro-representability by constructing an isomorphism \[\ker\left(\HH^i(X_A,\mathcal{K}_{3,X_A}^M)\rightarrow \HH^i(X,\mathcal{K}_{3,X}^M)\right)\cong \HH^i(X,\Omega^2_{X/k})\otimes_k \mathfrak{m}_A\] which is of independent interest.

Lastly, we point out that most of the results proved here actually apply to all Milnor $K$-cohomology groups of arbitrary weight $n\geq 1$. Our main theorem is only limited to weight $3$ because of the author's inability to determine the vanishing of certain sheaf cohomology groups related to differential $p$-forms for $p>2$. So, if one could generalize the vanishing result of Lemma \ref{lem: vanc2}, then it should be possible to prove an appropriate sufficiency criterion for pro-representability in any weight.\\ 

\noindent\textbf{Acknowledgments}. The author would like to thank an anonymous referee whose comments and suggestions have greatly increased the readability of the given article.\\

\noindent\textbf{Conventions}. We use the following conventions throughout:
\begin{itemize}[leftmargin=*]
	\item a variety is an integral scheme that is separated and of finite type over a base field.
\end{itemize}
	
\noindent\textbf{Notation}. We use the following notation throughout:
\begin{itemize}[leftmargin=*]
	\item $k/\mathbb{Q}$ is a fixed algebraic extension
	\item $(A,\mathfrak{m}_A)$ is an Artin local $k$-algebra with structure map $s:k\rightarrow A$ and residue field $A/\mathfrak{m}_A=k$
	\item $X$ is a smooth $k$-variety with structure map $\phi:X\rightarrow \Spec(k)$ and $X_A=X\times_k \Spec(A)$ is the trivial deformation of $X$ over $\Spec(A)$
	\item $\pi_1:X_A\rightarrow X$ and $\pi_2:X_A\rightarrow \Spec(A)$ are the first and second projections respectively.
	\item $\mathcal{J}\subset \mathcal{O}_{X_A}$ is the ideal sheaf of the reduction $\rho:X\rightarrow X_A$.
\end{itemize}

\section{Generalizing Bloch's result}
The purpose of this section is to collect preliminaries (mostly without proof) that directly generalize known results from the literature.
\begin{defn}
We write $\R^1_\rho$ for the kernel sheaf \[\R^1_\rho=\ker\left(\Omega^1_{X_A/k}\twoheadrightarrow \rho_* \Omega_{X/k}^1\right).\]
This sheaf is sometimes denoted $\Omega^1_{X_A,\mathcal{J}}$ or $\Omega^1_{X\times A, X\times \mathfrak{m}_A}$ in the literature. Note that $\R^1_\rho$ is coherent. 
\end{defn}

\begin{defn}
For any $n>1$ we define $\R^n_\rho$ as the kernel \[\R^n_\rho=\ker\left(\Omega^n_{X_A/k}\twoheadrightarrow \wedge^n\left(\rho_* \Omega_{X/k}^1\right)\right).\] This is equal to the image sheaf \[\R^n_\rho=\mathrm{Im}\left( \R^1_\rho \otimes_{\mathcal{O}_{X_A}} \Omega^{n-1}_{X_A/k}\twoheadrightarrow \Omega^n_{X_A/k}\right).\]
Note that $\R^n_\rho$ is also coherent.
\end{defn}

\begin{rmk}
The differentials $d^n:\Omega^n_{X_A/k}\rightarrow \Omega_{X_A/k}^{n+1}$ restricted to the subsheaves $\R^n_\rho$ fit together in a complex \[\R^0_\rho:=\mathcal{J}\rightarrow \R^1_\rho \xrightarrow{d^1} \R^2_\rho \xrightarrow{d^2} \cdots \xrightarrow{d^{i-1}} \R^i_\rho \xrightarrow{d^i} \cdots.\] We will often write $d$ to mean any one of these differentials.
\end{rmk}
	
For any scheme $Y$ we write $\mathcal{K}^M_{n,Y}$ for the Zariski sheaf associated to the presheaf of Milnor $K$-groups defined by the assignment \[U \rightsquigarrow K^M_n(\mathcal{O}_Y(U))\] for any open $U\subset Y$. There is then an exact sequence \[ 0\rightarrow \mathcal{K}^M_{n,\rho}\rightarrow \mathcal{K}^M_{n,X_A}\rightarrow \mathcal{K}^M_{n,X}\rightarrow 0\] induced by the reduction $\mathcal{O}_{X_A}\rightarrow \rho_*\mathcal{O}_X$ and with $\mathcal{K}^M_{n,\rho}$ defined to be the appropriate kernel sheaf. This exact sequence is, moreover, right-split by the structure map \[\mathcal{O}_X\rightarrow s_*\mathcal{O}_{X_A}\rightarrow s_*\rho_*\mathcal{O}_X=\mathcal{O}_X.\]

\begin{lem}\label{lem: sheafify}
For each $n\geq 2$, there is an isomorphism of sheaves of abelian groups \[\psi^n:\mathcal{K}^M_{n,\rho}\xrightarrow{\sim} \R^{n-1}_\rho/d\R^{n-2}_\rho\] coming from sheafifying the Bloch maps of \cite[Theorem 2.10]{MR3859378}. $\hfill\square$
\end{lem}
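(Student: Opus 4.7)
The plan is to sheafify the Bloch isomorphism of Gorchinskiy and Tyurin \cite[Theorem 2.10]{MR3859378}. Their result asserts that, for any ring $B$ with surjection $B\twoheadrightarrow A$ whose kernel $J$ is nilpotent, there is a natural isomorphism
$$K_n^M(B,J) := \ker\bigl(K^M_n(B)\to K^M_n(A)\bigr) \xrightarrow{\sim} \Omega^{n-1}_{B,J}/d\Omega^{n-2}_{B,J},$$
where $\Omega^i_{B,J}$ denotes the kernel of $\Omega^i_{B/k}\to \Omega^i_{A/k}$. Naturality with respect to ring homomorphisms guarantees that, applying this construction to $B=\mathcal{O}_{X_A}(U)$, $A=\mathcal{O}_X(\rho^{-1}(U))$, and $J=\mathcal{J}(U)$ for each open $U$, one obtains a morphism of presheaves on $X_A$.

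I would then take associated sheaves on both sides. The left-hand presheaf sheafifies to $\mathcal{K}^M_{n,\rho}$, since that sheaf was defined as the kernel of the reduction map $\mathcal{K}^M_{n,X_A}\to \mathcal{K}^M_{n,X}$ and kernels of presheaf maps sheafify to kernels of sheaf maps. For the right-hand side, I would identify the sheafification of $U\mapsto \Omega^i_{\mathcal{O}_{X_A}(U),\mathcal{J}(U)}$ with $\R^i_\rho$ by comparing defining exact sequences stalkwise, using that $\rho$ is a closed immersion with image equal to the underlying topological space of $X_A$ and that $\Omega^i_{X_A/k}$ is the sheafification of $U\mapsto \Omega^i_{\mathcal{O}_{X_A}(U)/k}$. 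The Bloch differentials correspond to the differentials $d^i$ of the complex of $\R^i_\rho$ under these identifications.

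The main technical obstacle is ensuring that the two descriptions of $\R^n_\rho$ (the kernel definition and the image definition) match the analogous presentations of $\Omega^n_{B,J}$ used by Gorchinskiy and Tyurin, and that the resulting identification intertwines the differentials. Once this bookkeeping is done, the induced sheaf map is an isomorphism because it is so on stalks by \cite[Theorem 2.10]{MR3859378}, which carries the essential content of the statement.
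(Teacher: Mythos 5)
Your proposal is correct and is exactly the route the paper takes: the paper offers no written proof for this lemma, asserting only that the isomorphism "comes from sheafifying the Bloch maps of [Gorchinskiy--Tyurin, Theorem 2.10]," and your plan (naturality gives a morphism of presheaves, sheafify, identify the sheafified relative differentials with $\R^i_\rho$, and conclude by checking stalks, where the stalks $\mathcal{O}_{X_A,x}$ are local $\mathbb{Q}$-algebras to which the cited theorem applies) is precisely the elaboration of that one-line justification. The only point worth making explicit is that the split nilpotent hypothesis of the cited theorem is satisfied here because the reduction $\mathcal{O}_{X_A}\rightarrow\rho_*\mathcal{O}_X$ is right-split by the structure map, as noted in the paper.
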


From now on we identify $\Omega_{X_A/k}^1$ with the sum \[\Omega^1_{X_A/k}\cong \pi_1^* \Omega_{X/k}^1 \oplus \pi_2^* \Omega^1_{A/k}\] via the isomorphism of \cite[\href{https://stacks.math.columbia.edu/tag/01V1}{Tag 01V1}]{stacks-project}. Similarly we identify \[\tag{K{\"u}n}\label{Kun}\Omega^n_{X_A/k}\cong \bigoplus_{j=0}^n \Omega_{X/k}^j \boxtimes \Omega_{A/k}^{n-j}\] where for any $0\leq j \leq n$ we use the notation \[\Omega_{X/k}^j \boxtimes \Omega_{A/k}^{n-j}:=\pi_1^*\Omega_{X/k}^j\otimes_{\mathcal{O}_{X_A}} \pi_2^*\Omega_{A/k}^{n-j}\quad\left(\,\,\cong \Omega_{X/k}^j\otimes_k\Omega_{A/k}^{n-j}\right)\] for the exterior product of sheaves on the product $X_A=X\times_k \mathrm{Spec}(A)$. The following two lemmas are direct generalizations from the case when $n=1$ which is observed in \cite[\S3]{MR371891}.

\begin{lem}\label{lem: splitting}
The composition \[ \R^n_\rho\rightarrow \Omega^n_{X_A/k}\twoheadrightarrow \bigoplus_{j=0}^{n-1} \Omega_{X/k}^j \boxtimes \Omega_{A/k}^{n-j}\] of the natural inclusion and projection induces a short exact sequence \[ 0\rightarrow \Omega_{X/k}^n\boxtimes \mathfrak{m}_A \rightarrow \R^n_\rho \rightarrow\bigoplus_{j=0}^{n-1} \Omega_{X/k}^j \boxtimes \Omega_{A/k}^{n-j} \rightarrow 0.\] This exact sequence is, moreover, split.$\hfill\square$
\end{lem}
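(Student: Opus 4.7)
The plan is to exploit the Künneth decomposition (\ref{Kun}) to exhibit $\R^n_\rho$ explicitly as a direct sum inside $\Omega^n_{X_A/k}$, from which both the short exact sequence and its splitting will be immediate. First, I would identify the defining surjection $\Omega^n_{X_A/k}\twoheadrightarrow \wedge^n(\rho_*\Omega^1_{X/k})$ with the $n$-th exterior power of the reduction $r:\Omega^1_{X_A/k}\twoheadrightarrow \rho_*\Omega^1_{X/k}$; this uses smoothness (so that $\Omega^n\cong\wedge^n\Omega^1$) together with $\rho$ being a closed immersion (so exterior powers commute with $\rho_*$).

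Next, I would analyze $\wedge^n r$ on each Künneth summand. Any local section of $\pi_2^*\Omega^{n-j}_{A/k}$ is a sum of terms of the form $a\,da_1\wedge\cdots\wedge da_{n-j}$ with $a,a_i\in A$, and since each $\rho^\#a_i$ lies in $k\subset \mathcal{O}_X$ we have $d(\rho^\#a_i)=0$ in $\Omega^1_{X/k}$. Consequently, for every $j<n$ the restriction of $\wedge^n r$ to $\Omega^j_{X/k}\boxtimes \Omega^{n-j}_{A/k}$ vanishes, so each such summand is already contained in $\R^n_\rho$. On the top summand $\Omega^n_{X/k}\boxtimes A$, the map $\wedge^n r$ is $\mathrm{id}\otimes (A\twoheadrightarrow k)$, which has kernel $\Omega^n_{X/k}\boxtimes \mathfrak{m}_A$. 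Assembling these observations yields the internal direct-sum decomposition
\[\R^n_\rho \,=\, \bigl(\Omega^n_{X/k}\boxtimes \mathfrak{m}_A\bigr)\,\oplus\, \bigoplus_{j=0}^{n-1} \Omega^j_{X/k}\boxtimes \Omega^{n-j}_{A/k}\]
inside $\Omega^n_{X_A/k}$.

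The short exact sequence in the statement then follows by projecting this decomposition onto its second piece (which, precomposed with the inclusion $\R^n_\rho\hookrightarrow\Omega^n_{X_A/k}$, is exactly the composition described in the lemma), and the required splitting is provided for free by the decomposition itself. I do not expect any serious obstacle: the only mildly nontrivial bookkeeping is the identification of the defining surjection with $\wedge^n r$, and the rest is a direct generalization of Bloch's treatment of the $n=1$ case in \cite[\S3]{MR371891}.
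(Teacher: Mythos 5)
Your proof is correct and is essentially the argument the paper has in mind: the lemma is stated without proof as a direct generalization of Bloch's $n=1$ computation, and your explicit Künneth analysis --- each summand $\Omega^j_{X/k}\boxtimes\Omega^{n-j}_{A/k}$ with $j<n$ is killed by $\wedge^n r$ because $r$ annihilates $\pi_2^*\Omega^1_{A/k}$, while the top summand maps by $\mathrm{id}\otimes(A\twoheadrightarrow k)$ --- is exactly that generalization, yielding the internal decomposition of $\R^n_\rho$ and hence both the exact sequence and its splitting.
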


From now on we write $\mathfrak{m}^0_A=\ker(\mathfrak{m}_A\xrightarrow{d} \Omega_{A/k}^1)$ for the given kernel. 

\begin{lem}\label{lem: sqses}
For any $n\geq 1$, the following square of differentials and inclusions is commutative.
\[\tag{$\ast$}\label{square}\begin{tikzcd}
\Omega^{n-1}_{X/k}\otimes_k \mathfrak{m}_A^0 \arrow{r}\arrow["d\otimes 1"]{d} & \R^{n-1}_\rho\arrow["d"]{d} \\ \Omega_{X/k}^n\boxtimes \mathfrak{m}_A\arrow{r} & \R^n_\rho
\end{tikzcd}\]

From the square \emph{(}\ref{square}\emph{)} one also gets an exact sequence \[ 0\rightarrow\frac{\Omega_{X/k}^n\boxtimes \mathfrak{m}_A}{ \mathrm{Im}(d\otimes 1)} \rightarrow \frac{\R^n_\rho}{d\R^{n-1}_\rho} \rightarrow \mathcal{C}^n_\rho\rightarrow 0\] where \[\mathcal{C}^n_\rho=\frac{\bigoplus_{j=0}^{n-1} \Omega_{X/k}^j \boxtimes \Omega_{A/k}^{n-j}}{\Omega_{X/k}^{n-1}\otimes_k d(\mathfrak{m}_A) + d\left(\bigoplus_{j=0}^{n-2}\Omega_{X/k}^j \boxtimes \Omega_{A/k}^{n-1-j}\right) }\]is the canonical quotient sheaf.
\end{lem}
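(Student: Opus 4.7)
My plan has two parts: establishing commutativity of (\ref{square}) and then deriving the exact sequence by a snake lemma argument. First, to check commutativity of (\ref{square}), I take a local section $\omega \otimes a$ of $\Omega^{n-1}_{X/k} \otimes_k \mathfrak{m}_A^0$ and compute its differential inside $\Omega^\cdot_{X_A/k}$: the Leibniz rule gives $d(\omega \otimes a) = d\omega \otimes a + (-1)^{n-1}\omega \wedge da$, and the hypothesis $a \in \mathfrak{m}_A^0$ kills the second summand, leaving $d\omega \otimes a$, which is exactly the image of $\omega \otimes a$ under $d \otimes 1$ followed by the bottom inclusion.

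For the exact sequence, I would complete (\ref{square}) to the commutative diagram of short exact sequences
\[\begin{tikzcd}[ampersand replacement=\&]
0 \arrow[r] \& \Omega^{n-1}_{X/k} \otimes_k \mathfrak{m}_A^0 \arrow[r] \arrow[d, "d \otimes 1"] \& \R^{n-1}_\rho \arrow[r] \arrow[d, "d"] \& Q \arrow[r] \arrow[d, "\bar{d}"] \& 0 \\
0 \arrow[r] \& \Omega^n_{X/k} \boxtimes \mathfrak{m}_A \arrow[r] \& \R^n_\rho \arrow[r] \& N \arrow[r] \& 0
\end{tikzcd}\]
whose bottom row comes from Lemma \ref{lem: splitting} (so $N=\bigoplus_{j=0}^{n-1}\Omega^j_{X/k}\boxtimes \Omega^{n-j}_{A/k}$), whose top row defines $Q$ as the cokernel of the inclusion, and whose induced vertical $\bar{d}$ is well-defined thanks to the commutativity of (\ref{square}). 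Using the splittings of Lemma \ref{lem: splitting} one identifies $Q \cong (\Omega^{n-1}_{X/k} \otimes_k d\mathfrak{m}_A) \oplus \bigoplus_{j=0}^{n-2}\Omega^j_{X/k} \boxtimes \Omega^{n-1-j}_{A/k}$, and a direct calculation shows $\bar{d}$ has image equal to the denominator defining $\mathcal{C}^n_\rho$, so $\mathrm{coker}(\bar{d}) \cong \mathcal{C}^n_\rho$. The snake lemma then delivers the four-term exact sequence
\[\ker(\bar{d}) \xrightarrow{\delta} \frac{\Omega^n_{X/k} \boxtimes \mathfrak{m}_A}{\mathrm{Im}(d \otimes 1)} \to \frac{\R^n_\rho}{d\R^{n-1}_\rho} \to \mathcal{C}^n_\rho \to 0,\]
and the desired exact sequence follows once I prove the connecting map $\delta$ vanishes.

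The vanishing of $\delta$ is the main obstacle, and it amounts to the identity $d\R^{n-1}_\rho \cap (\Omega^n_{X/k} \boxtimes \mathfrak{m}_A) \subseteq \mathrm{Im}(d \otimes 1)$. I would prove this as follows: suppose $\eta = \mu + \nu \in \R^{n-1}_\rho$, with $\mu = \sum_i \omega_i \otimes a_i \in \Omega^{n-1}_{X/k}\boxtimes \mathfrak{m}_A$ and $\nu \in \bigoplus_{j=0}^{n-2} \Omega^j_{X/k}\boxtimes\Omega^{n-1-j}_{A/k}$, satisfies $d\eta \in \Omega^n_{X/k} \boxtimes \mathfrak{m}_A$. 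Because $d\nu$ has no component in $\Omega^n_{X/k}\boxtimes \mathfrak{m}_A$, the Leibniz rule forces $d\eta = \sum_i d\omega_i \otimes a_i$. Applying $d$ once more and invoking $d^2 = 0$ on $\Omega^\cdot_{X_A/k}$ yields $\sum_i d\omega_i \otimes da_i = 0$ in $\Omega^n_{X/k} \otimes_k \Omega^1_{A/k}$. Since $\Omega^n_{X/k}$ is $k$-flat, tensoring the exact sequence $0 \to \mathfrak{m}_A^0 \to \mathfrak{m}_A \xrightarrow{d} \Omega^1_{A/k}$ with $\Omega^n_{X/k}$ keeps it exact, so $\sum_i d\omega_i \otimes a_i \in \Omega^n_{X/k} \otimes_k \mathfrak{m}_A^0$. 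Combined with the tautological membership $\sum_i d\omega_i \otimes a_i \in d\Omega^{n-1}_{X/k} \otimes_k \mathfrak{m}_A$ and any $k$-linear splitting $\mathfrak{m}_A = \mathfrak{m}_A^0 \oplus V$, the intersection equals $d\Omega^{n-1}_{X/k} \otimes_k \mathfrak{m}_A^0 = \mathrm{Im}(d \otimes 1)$, as required.
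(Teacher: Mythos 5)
Your proof is correct, and it follows the same overall architecture as the paper's: the same three-column diagram whose bottom row comes from Lemma \ref{lem: splitting}, and the same snake-lemma reduction to showing that the connecting map $\delta$ vanishes. Where you genuinely diverge is in how you kill $\delta$. The paper delegates this to Lemma \ref{lem: sumssections}, which by an explicit computation in a chosen basis of $\mathfrak{m}_A$ shows the stronger statement that any local section $\psi\in\R^{n-1}_\rho$ with $d\psi\in\Omega^n_{X/k}\boxtimes\mathfrak{m}_A$ decomposes as an element of $\ker(d)$ plus an element of $\Omega^{n-1}_{X/k}\otimes_k\mathfrak{m}_A^0$; vanishing of $\delta$ is then immediate. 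You instead prove only the inclusion that is actually needed, $d\R^{n-1}_\rho\cap(\Omega^n_{X/k}\boxtimes\mathfrak{m}_A)\subseteq\mathrm{Im}(d\otimes 1)$, and you get it essentially for free from $d^2=0$: the hypothesis forces $d\eta=\sum_i d\omega_i\otimes a_i$, differentiating again gives $\sum_i d\omega_i\otimes da_i=0$, and flatness over $k$ together with the elementary identity $(W'\otimes_k U)\cap(W\otimes_k U')=W'\otimes_k U'$ for subspaces places $d\eta$ in $d\Omega^{n-1}_{X/k}\otimes_k\mathfrak{m}_A^0=\mathrm{Im}(d\otimes 1)$. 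This is shorter and basis-free, at the cost of not producing the finer decomposition of $\psi$ itself that Lemma \ref{lem: sumssections} records; since the paper uses that decomposition only for this purpose, nothing is lost. Your identification of $Q$ with the paper's placeholder $\mathcal{E}$ and of $\mathrm{coker}(\bar d)$ with $\mathcal{C}^n_\rho$ agree with the paper's, and your Leibniz-rule check of the square (\ref{square}) is just the explicit form of the paper's appeal to the total-complex description of the differential.
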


\begin{proof}
To see that the square (\ref{square}) above is commutative, it suffices to observe that the differential $d$ of the de Rham complex $\Omega_{X_A/k}^\bullet$ is identifiable, via the isomorphism of (\ref{Kun}), with the differential of the total complex $\mathrm{Tot}(\Omega_{X/k}^\bullet \boxtimes \Omega_{A/k}^\bullet)$; for more information, one can consult \cite[\href{https://stacks.math.columbia.edu/tag/0FM9}{Tag 0FM9},\href{https://stacks.math.columbia.edu/tag/012Z}{Tag 012Z}]{stacks-project}. It follows from the existence of (\ref{square}) that there is a commutative diagram with exact rows
\[\begin{tikzcd}
0\arrow{r}&	\Omega^{n-1}_{X/k}\otimes_k \mathfrak{m}_A^0 \arrow{r}\arrow["d\otimes 1"]{d} & \R^{n-1}_\rho\arrow["d"]{d}\arrow{r} & \mathcal{E} \arrow{r}\arrow["\tilde{d}"]{d} & 0  \\ 0\arrow{r} &\Omega_{X/k}^n\boxtimes \mathfrak{m}_A\arrow{r} & \R^n_\rho\arrow{r} & \bigoplus_{j=0}^{n-1} \Omega_{X/k}^j \boxtimes \Omega_{A/k}^{n-j} \arrow{r}& 0
\end{tikzcd}\] where we use the placeholder \[\mathcal{E}= \Omega_{X/k}^{n-1}\otimes_k \left(\mathfrak{m}_A/\mathfrak{m}_A^0\right) \oplus \bigoplus_{j=0}^{n-2} \Omega_{X/k}^j \boxtimes \Omega_{A/k}^{n-1-j}\] and write $\tilde{d}$ for the induced map forming the rightmost vertical arrow.

We prove in Lemma \ref{lem: sumssections} below that the kernel of the composition \[\mathcal{R}^{n-1}_\rho\xrightarrow{d} \mathcal{R}_\rho^n \rightarrow \bigoplus_{j=0}^{n-1} \Omega_{X/k}^j \boxtimes \Omega_{A/k}^{n-j}\] is locally generated by both $\mathrm{ker}(d)$ and $\Omega_{X/k}^{n-1}\otimes_k \mathfrak{m}_A^0$. This implies that, for each point $x\in X_A$, any local section $\psi$ of $\mathcal{E}_x$ in the kernel of $\tilde{d}$ can be lifted to a local section $\psi'\in (\mathcal{R}_\rho^{n-1})_x$ that can be written as a sum of local sections from $\mathrm{ker}(d)_x$. The final claim of the lemma, regarding the given exact sequence, then follows from an application of the snake lemma to the above commutative diagram. 
\end{proof}

\begin{lem}\label{lem: sumssections}
With notation as in Lemma \ref{lem: sqses}, let $x\in X_A$ be a point and let $\psi\in (\mathcal{R}_{\rho}^{n-1})_x$ be a local section such that $d(\psi)$ is contained in $(\Omega_{X/k}^n\boxtimes \mathfrak{m}_A)_x\subset (\mathcal{R}^n_\rho)_x$. Then $\psi\in \mathrm{ker}(d)_x+(\Omega_{X/k}^{n-1}\otimes_k \mathfrak{m}_A^0)_x$.
\end{lem}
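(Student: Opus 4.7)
The plan is to exploit the K{\"u}nneth decomposition (\ref{Kun}) to reduce the statement to a concrete linear-algebra claim about the top Hodge piece of $\psi$. By Lemma \ref{lem: splitting} (applied with $n$ replaced by $n-1$), at the stalk one writes
\[\psi=\alpha+\sum_{j=0}^{n-2}\beta_j\]
with $\alpha\in(\Omega^{n-1}_{X/k}\boxtimes\mathfrak{m}_A)_x$ and $\beta_j\in(\Omega^j_{X/k}\boxtimes\Omega^{n-1-j}_{A/k})_x$. Since the de Rham differential on $X_A$ is the total differential of the double complex $\Omega_{X/k}^\bullet\boxtimes\Omega_{A/k}^\bullet$ it decomposes along bidegree as $d=d_X+(-1)^p d_A$, and the hypothesis that $d\psi\in\Omega^n_{X/k}\boxtimes\mathfrak{m}_A$ — the bidegree-$(n,0)$ summand — forces the vanishing of every lower-$p$ component of $d\psi$. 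In particular the $(n-1,1)$ relation reads $d_X\beta_{n-2}+(-1)^{n-1}d_A\alpha=0$, so $d_A\alpha$ lies in the image of $d_X$.

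The decisive step is to apply $d_X$ to this relation and use $d_X^2=0$: one obtains $d_X d_A\alpha=0$. Fix a $k$-basis $\{m_i^0\}_i\cup\{m'_j\}_j$ of $\mathfrak{m}_A$ with $\{m_i^0\}$ a basis of $\mathfrak{m}_A^0$, and expand $\alpha=\sum_i\omega_i\otimes m_i^0+\sum_j\omega'_j\otimes m'_j$. Since $\{m'_j\}$ maps to a basis of $\mathfrak{m}_A/\mathfrak{m}_A^0$ and the latter embeds into $\Omega^1_{A/k}$ by the very definition of $\mathfrak{m}_A^0$, the set $\{dm'_j\}$ is $k$-linearly independent; thus
\[d_Xd_A\alpha=\sum_j d\omega'_j\otimes dm'_j=0\]
forces $d\omega'_j=0$ for every $j$. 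Setting $\eta_0:=\sum_i\omega_i\otimes m_i^0\in(\Omega^{n-1}_{X/k}\otimes_k\mathfrak{m}_A^0)_x$, a comparison of bidegrees of $d(\psi-\eta_0)=d\psi-d\eta_0$ — both now purely in bidegree $(n,0)$, the latter because $dm_i^0=0$ — reduces to the identity $d_X\alpha-d_X\eta_0=\sum_j d\omega'_j\otimes m'_j=0$, giving $\psi-\eta_0\in\ker(d)_x$ as required.

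The main obstacle is recognizing that the single derived identity $d_X d_A\alpha=0$, obtained for free by applying $d_X$ to the $(n-1,1)$-constraint, already captures enough to pin down the ``non-$\mathfrak{m}_A^0$'' part of $\alpha$ modulo closed forms. The injectivity of $\mathfrak{m}_A/\mathfrak{m}_A^0\hookrightarrow\Omega^1_{A/k}$ baked into the definition of $\mathfrak{m}_A^0$ is precisely what translates this relation into the pointwise vanishing of $d\omega'_j$; the remainder of the argument is routine bookkeeping with the K{\"u}nneth decomposition, and none of the lower $\beta_j$'s need to be modified.
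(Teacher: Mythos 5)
Your proof is correct, and while it shares the same skeleton as the paper's argument --- the K\"unneth bidegree decomposition of $d$ and a basis of $\mathfrak{m}_A$ extending one of $\mathfrak{m}_A^0$, together with the injectivity of $d$ on $\mathfrak{m}_A/\mathfrak{m}_A^0$ --- the decisive step is executed differently and more economically. The paper expands the bidegree-$(n-2+\text{lower},1+\text{higher})$ pieces of $\psi$ explicitly against a spanning set $\{de_i\}\cup\{e_jde_i\}$ of $\Omega^1_{A/k}$ (which requires the auxiliary choice of a subset $J'$ and bookkeeping with the placeholders $\psi'$, $\psi''$), reads off the relation $d\alpha_j+(-1)^{n-1}\omega_j=0$ from the $(n-1,1)$ component, and concludes that the non-$\mathfrak{m}_A^0$ coefficients $\omega_j$ are \emph{exact}. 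You instead apply $d_X$ to the whole $(n-1,1)$ relation $d_X\beta_{n-2}+(-1)^{n-1}d_A\alpha=0$ and use $d_X^2=0$ to get $d_Xd_A\alpha=0$, which by linear independence of the $dm'_j$ yields only that the $\omega'_j$ are \emph{closed} --- weaker information, but exactly enough, since then $d\psi$ and $d\eta_0$ agree in the only bidegree where either is nonzero. This bypasses any analysis of the internal structure of the lower pieces $\beta_j$ and of $\Omega^1_{A/k}$ beyond the image of $d:\mathfrak{m}_A\to\Omega^1_{A/k}$, which is a genuine streamlining; the only thing the paper's version buys in exchange is the slightly finer conclusion that the closed complement of $\eta_0$ decomposes into explicitly exhibited closed summands, which is not needed for Lemma \ref{lem: sqses}.
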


\begin{proof}
Fix a basis $\{e_i\}_{i\in I}$ for the $k$-vector space $\mathfrak{m}_A^0$ and extend this to a basis $\{e_i\}_{i\in J}$ for the $k$-vector space $\mathfrak{m}_A$. We note that it's possible to have $\mathfrak{m}_A^0=0$, in which case we have $I=\emptyset$, and we can assume that $\mathfrak{m}_A\neq 0$ since if $\mathfrak{m}_A=0$ then $A=k$ and the claim is immediate.

By general properties of K{\"a}hler differentials, in particular \cite[\href{https://stacks.math.columbia.edu/tag/02HP}{Tag 02HP}]{stacks-project}, we have that $\Omega_{A/k}^1$ is generated as an $A$-module by the elements $\{de_i\}_{i\in J}$.
Considering the $A$-module $\Omega_{A/k}^1$ as a $k$-vector space, it follows that the elements \[\{de_i\}_{i\in J\setminus I}, \quad \mbox{and}\quad  \{e_jde_i\}_{j\in J, i\in J\setminus I}\] span all of $\Omega_{A/k}^1$. The elements $\{de_i\}_{i\in J\setminus I}$ are $k$-linearly independent by construction so, by restricting to a subset $J'\subset J$, we can suppose that the collection \[\{de_i\}_{i\in J\setminus I}\cup \{e_jde_i\}_{j\in J', i\in J\setminus I}\] forms a basis for $\Omega_{A/k}^1$.

Using the splitting of $\mathcal{R}^{n-1}_\rho$ given in Lemma \ref{lem: splitting}, an arbitrary local section $\psi\in (\mathcal{R}^{n-1}_\rho)_x$ can be written as a sum \[\psi=\sum_{i\in I} \omega_i\otimes e_i + \sum_{j\in J\setminus I} \omega_j\otimes e_j + \sum_{r\in J\setminus I} \alpha_r \otimes de_r + \sum_{r\in J\setminus I, s\in J'} \alpha_{r,s} \otimes e_sde_r +\psi'\] where we have: \begin{itemize}[leftmargin=*, label={$\cdot$}]
\item $\omega_i\in \Omega_{X/k,x}^{n-1}$ for all $i\in I$,
\item $\omega_j \in \Omega_{X/k,x}^{n-1}$ for all $j\in J\setminus I$,
\item $\alpha_r\in \Omega_{X/k,x}^{n-2}$ for all $r\in J\setminus I$,
\item $\alpha_{r,s}\in \Omega_{X/k,x}^{n-2}$ for all pairs $(r,s)\in (J\setminus I)\times J'$,
\item and for a uniquely determined local section $\psi'$ of $(\mathcal{R}^{n-1}_\rho)_x$ contained in the summand coming from $\bigoplus_{j=2}^{n-1} \Omega_{X/k}^{n-1-j} \boxtimes \Omega_{A/k}^{j}$.
\end{itemize} Applying the differential $d$ to the local section $\psi$ then gives:
\begin{align*}
d(\psi)=& \sum_{i\in I} d\omega_i \otimes e_i\\
& +\sum_{j\in J\setminus I} d\omega_j\otimes e_j + (-1)^{n-1} \omega_j\otimes de_j\\ & + \sum_{r\in J\setminus I} d\alpha_r \otimes de_r \\&
+\sum_{r\in J\setminus I, s\in J'} d(\alpha_{r,s})\otimes e_sde_r +(-1)^{n-2} \alpha_{r,s}\otimes de_s\wedge de_r \\
&+d\psi'.
\end{align*}
Combining terms by their appropriate bidegrees, we can write:
\begin{multline*} d(\psi)- \psi''=\\ \sum_{i\in J} d\omega_i \otimes e_i + \sum_{j\in J\setminus I} (d\alpha_j +(-1)^{n-1}\omega_j) \otimes de_j
+\sum_{r\in J\setminus I, s\in J'} d(\alpha_{r,s})\otimes e_sde_r\end{multline*} for a local section $\psi''$ of $(\mathcal{R}^n_\rho)_x$ contained in the sum of those summands where $\Omega^j_{A/k}$ appears as a factor for any $j\geq 2$.

Now suppose that $d(\psi)$ is contained in $(\Omega_{X/k}^n\boxtimes \mathfrak{m}_A)_x$. It follows that $\psi''=0$, there is vanishing $d(\alpha_{r,s})=0$ for all $r\in J\setminus I$, and we get an equality $d\alpha_j+(-1)^{n-1}\omega_j=0$ for all $j\in J\setminus I$. From the latter of these we find \[\omega_j=d((-1)^n\alpha_j)\] for all $j\in J\setminus I$. From this we find that both \[ \sum_{j\in J\setminus I} \omega_j\otimes e_j + \sum_{r\in J\setminus I} \alpha_r \otimes de_r \quad \mbox{and}\quad \sum_{r\in J\setminus I, s\in J'} \alpha_{r,s} \otimes e_sde_r +\psi'\] are elements of $\mathrm{ker}(d)_x$. This immediately implies the claim, comparing with the presentation of $\psi$ above.
\end{proof}

\begin{exmp}
	If $A=k[\epsilon]/(\epsilon^2)$ is the ring of dual numbers, then both $\mathfrak{m}_A^0=0$ and $\mathcal{C}^n_\rho=0$. It follows that \[\HH^i(X,\R^n_\rho/d\R^{n-1}_\rho)=\HH^i(X,\Omega^n_{X/k})\] for any $i\geq 0$ and for any $n\geq 1$ in this case.
\end{exmp}

\begin{exmp}
	In the case $n=1$, the sheaf $\mathcal{C}^n_\rho$ simplifies to
	\[\mathcal{C}^1_\rho= \mathcal{O}_X\otimes_k \left(\Omega_{A/k}^1/d(\mathfrak{m}_A)\right).\] This is the case that's studied in \cite{MR371891}; explicitly, the sheaf $\mathcal{C}^1_\rho$ appears in \cite[(3.2)]{MR371891}.
\end{exmp}

\begin{exmp}
For $n=2$ the above sheaf $\mathcal{C}^n_\rho$ becomes \[\mathcal{C}^2_\rho= \frac{\left(\Omega_{X/k}^1 \boxtimes \Omega_{A/k}^1\right)\oplus \left(\mathcal{O}_X\boxtimes \Omega^2_{A/k}\right)}{\Omega_{X/k}^1\otimes_k d(\mathfrak{m}_A) + d(\mathcal{O}_X\boxtimes\Omega_{A/k}^1)}.\]
We study this sheaf in more detail in the next section.
\end{exmp}

\section{Weight 3}
We write $\mathscr{T}_X^{i,n}$ for the functor which assigns to an Artin local $k$-algebra $(A,\mathfrak{m}_A)$ with residue field $A/\mathfrak{m}_A\cong k$, as above, the group \[\mathscr{T}_{X}^{i,n}(A) = \HH^i(X,\mathcal{K}^M_{n,\rho}).\]
Our goal in this section is to prove the following theorem.

\begin{thm}\label{thm: Bloch3}
Suppose that $X$ is a smooth, proper, and geometrically connected $k$-variety. Fix an integer $j\geq 1$, and suppose also that the following conditions are satisfied:
\begin{enumerate}
\item $\HH^j(X,\mathcal{O}_X)=\HH^{j+1}(X,\mathcal{O}_X)=\HH^{j+2}(X,\mathcal{O}_X)=0$,
\item $\HH^j(X,\Omega_{X/k}^1)=\HH^{j+1}(X,\Omega_{X/k}^1)=0$.
\end{enumerate}
Then there is a canonical isomorphism \[\mathscr{T}_X^{j,3}(A)=\HH^j(X,\Omega^2_{X/k})\otimes_k \mathfrak{m}_A\] for any Artinian local $k$-algebra $(A,\mathfrak{m}_A)$ as above. In particular, this implies that the functor $\mathscr{T}_X^{j,3}$ is pro-representable.
\end{thm}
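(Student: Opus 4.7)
The plan is to compute $\mathscr{T}_X^{j,3}(A) = \HH^j(X, \mathcal{K}^M_{3,\rho})$ in stages, translating Milnor $K$-cohomology into sheaf cohomology of K{\"a}hler differentials. By Lemma \ref{lem: sheafify} applied with $n = 3$, one has $\mathcal{K}^M_{3,\rho} \cong \R^2_\rho/d\R^1_\rho$, and Lemma \ref{lem: sqses} with $n = 2$ then produces the short exact sequence
\[0 \to \frac{\Omega^2_{X/k} \boxtimes \mathfrak{m}_A}{\mathrm{Im}(d \otimes 1)} \to \frac{\R^2_\rho}{d\R^1_\rho} \to \mathcal{C}^2_\rho \to 0,\]
with $\mathrm{Im}(d \otimes 1)$ equal to $d\Omega^1_{X/k} \otimes_k \mathfrak{m}_A^0$. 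The proof then amounts to controlling the cohomology of the two outer terms.

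First, I would establish the vanishing $\HH^{j-1}(X, \mathcal{C}^2_\rho) = 0 = \HH^j(X, \mathcal{C}^2_\rho)$; this is the content of the vanishing lemma (Lemma \ref{lem: vanc2}) alluded to in the introduction. Using the explicit presentation
\[\mathcal{C}^2_\rho = \frac{(\Omega^1_{X/k} \otimes_k \Omega^1_{A/k}) \oplus (\mathcal{O}_X \otimes_k \Omega^2_{A/k})}{\Omega^1_{X/k} \otimes_k d(\mathfrak{m}_A) + d(\mathcal{O}_X \boxtimes \Omega^1_{A/k})},\]
one filters $\mathcal{C}^2_\rho$ by carefully chosen subsheaves whose associated graded pieces are direct sums of $\mathcal{O}_X$ and $\Omega^1_{X/k}$ tensored with finite-dimensional $k$-vector spaces. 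The cohomology of each graded piece reduces to $\HH^i(X, \mathcal{O}_X)$ and $\HH^i(X, \Omega^1_{X/k})$ at values of $i$ controlled by the given hypotheses, and the stated vanishings propagate through iterated long exact sequences to force the required cohomological vanishing of $\mathcal{C}^2_\rho$.

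Second, I would compute the left-hand term. From the short exact sequence
\[0 \to d\Omega^1_{X/k} \otimes_k \mathfrak{m}_A^0 \to \Omega^2_{X/k} \otimes_k \mathfrak{m}_A \to \frac{\Omega^2_{X/k} \otimes_k \mathfrak{m}_A}{d\Omega^1_{X/k} \otimes_k \mathfrak{m}_A^0} \to 0\]
together with $0 \to Z^1 \to \Omega^1_{X/k} \to d\Omega^1_{X/k} \to 0$ (where $Z^1$ denotes the sheaf of closed $1$-forms), the vanishings $\HH^j(X, d\Omega^1_{X/k}) = \HH^{j+1}(X, d\Omega^1_{X/k}) = 0$ should follow from the given vanishings of $\HH^j(\Omega^1_{X/k})$ and $\HH^{j+1}(\Omega^1_{X/k})$ combined with auxiliary inputs on $Z^1$ available for smooth proper varieties in characteristic zero (Hodge-to-de Rham degeneration). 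This yields $\HH^j(X, \tfrac{\Omega^2_{X/k} \otimes_k \mathfrak{m}_A}{\mathrm{Im}(d \otimes 1)}) \cong \HH^j(X, \Omega^2_{X/k}) \otimes_k \mathfrak{m}_A$.

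Assembling both steps through the long exact sequence of the first displayed short exact sequence produces the desired canonical isomorphism $\mathscr{T}_X^{j,3}(A) \cong \HH^j(X, \Omega^2_{X/k}) \otimes_k \mathfrak{m}_A$. Pro-representability is then formal: setting $V = \HH^j(X, \Omega^2_{X/k})$, which is finite-dimensional because $X$ is proper, the functor $A \mapsto V \otimes_k \mathfrak{m}_A$ on $\mathsf{Art}_k$ is pro-represented by the completion $\widehat{\mathrm{Sym}_k(V^*)}$ of the symmetric algebra on the dual at its augmentation ideal. The main obstacle---and the reason for the restriction to weight $3$---is the cohomological vanishing of $\mathcal{C}^2_\rho$; the analogous quotient sheaf for higher weights involves differential forms of degree $\geq 2$, whose sheaf cohomology cannot be controlled solely from hypotheses on $\HH^i(\mathcal{O}_X)$ and $\HH^i(\Omega^1_{X/k})$.
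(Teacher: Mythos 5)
Your skeleton matches the paper's: Lemma \ref{lem: sheafify} with $n=3$, the exact sequence of Lemma \ref{lem: sqses} with $n=2$, a vanishing statement for $\mathcal{C}^2_\rho$, a computation of $\HH^j$ of the subsheaf $(\Omega^2_{X/k}\boxtimes\mathfrak{m}_A)/(d\Omega^1_{X/k}\otimes_k\mathfrak{m}_A^0)$, and a formal pro-representability endgame (your direct identification of the pro-representing ring $\widehat{\mathrm{Sym}_k(V^*)}$ is fine, and arguably cleaner than the paper's tangent--obstruction argument). But two steps as you state them do not go through. First, you claim to establish $\HH^{j-1}(X,\mathcal{C}^2_\rho)=0$. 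This is not deducible from the hypotheses and is false in general: by the paper's own computation $\HH^{j-1}(X,\mathcal{C}^2_\rho)$ receives a surjection-up-to-$\HH^j(K)$ from $\bigl(\HH^{j-1}(X,\Omega^1_{X/k})\otimes_k\Omega^1_{A/k}\bigr)\oplus\bigl(\HH^{j-1}(X,\mathcal{O}_X)\otimes_k\Omega^2_{A/k}\bigr)$, and nothing is assumed about cohomology in degree $j-1$ (already for $j=1$ one has $\HH^0(X,\mathcal{O}_X)=k\neq 0$). What Lemma \ref{lem: vanc2} actually proves is only $\HH^j(X,\mathcal{C}^2_\rho)=0$; the degree-$(j-1)$ input you want is really the vanishing of the connecting map $\HH^{j-1}(X,\mathcal{C}^2_\rho)\rightarrow\HH^j\bigl(X,(\Omega^2_{X/k}\boxtimes\mathfrak{m}_A)/\mathrm{Im}(d\otimes 1)\bigr)$, which must be argued differently (e.g.\ by exhibiting enough lifts of classes of $\mathcal{C}^2_\rho$ to $\R^2_\rho/d\R^1_\rho$), not by killing the whole group.

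Second, your treatment of $\HH^j(X,d\Omega^1_{X/k})=\HH^{j+1}(X,d\Omega^1_{X/k})=0$ appeals to Hodge-to-de~Rham degeneration as the ``auxiliary input on $Z^1$.'' That is not the relevant tool. From $0\to Z^1\to\Omega^1_{X/k}\to d\Omega^1_{X/k}\to 0$ and the assumed vanishing of $\HH^j(X,\Omega^1_{X/k})$ and $\HH^{j+1}(X,\Omega^1_{X/k})$, what you need is $\HH^{j+1}(X,Z^1)=\HH^{j+2}(X,Z^1)=0$; via $0\to d\mathcal{O}_X\to Z^1\to\mathscr{H}^1\to 0$ and $0\to\underline{k}\to\mathcal{O}_X\to d\mathcal{O}_X\to 0$ this reduces to $\HH^{j+1}(X,\mathcal{O}_X)=\HH^{j+2}(X,\mathcal{O}_X)=0$ (hypothesis (1) --- note this is exactly where the otherwise unexplained assumption $\HH^{j+2}(X,\mathcal{O}_X)=0$ enters, which your sketch never uses) together with $\HH^a(X,\mathscr{H}^1)=0$ for $a>1$. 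The latter is the Bloch--Ogus acyclicity of the de~Rham cohomology sheaves (the paper's citation of \cite[Corollary 6.2]{MR412191}), not degeneration of the Hodge-to-de~Rham spectral sequence; degeneration gives vanishing of maps like $\HH^i(X,\mathcal{O}_X)\to\HH^i(X,\Omega^1_{X/k})$, which does not yield the required acyclicity of $Z^1$. With these two points repaired, your argument coincides with the paper's.
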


\begin{exmp}
	The assumptions of Theorem \ref{thm: Bloch3} are satisfied if $j=3$ and $X$ is a smooth complete intersection of two quadrics in $\mathbb{P}^7$ or if $X$ is a smooth cubic hypersurface in $\mathbb{P}^6$, see \cite[\S2]{MR309943}. If $j=3$ still, the assumptions are also satisfied when $X$ is a Gushel-Mukai fivefold \cite[Proposition 3.1]{MR4032203}. 
\end{exmp}

The proof of Theorem \ref{thm: Bloch3} relies crucially on the next two lemmas.

\begin{lem}\label{lem: vanc2}
Suppose that $X$ is geometrically integral. Fix some $j\geq 1$. Assume also that:
\begin{enumerate}
\item $\HH^j(X,\mathcal{O}_X)=\HH^{j+1}(X,\mathcal{O}_X)=0$,
\item and $\HH^j(X,\Omega_{X/k}^1)=\HH^{j+1}(X,\Omega_{X/k}^1)=0$.
\end{enumerate}
Then we have $\HH^j(X,\mathcal{C}_\rho^2)=0$, and hence also \[\HH^j(X,\mathcal{R}_\rho^2/d\mathcal{R}^1_\rho)= \HH^j(X,\Omega_{X/k}^2\boxtimes \mathfrak{m}_A/d\Omega^1_{X/k}\otimes_k \mathfrak{m}_A^0),\] for any Artinian local $k$-algebra $(A,\mathfrak{m}_A)$ with $A/\mathfrak{m}_A\cong k$ as above.
\end{lem}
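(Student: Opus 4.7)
The plan is to prove the vanishing $\HH^j(X,\mathcal{C}^2_\rho)=0$, from which the second identification of cohomology groups in the statement will follow by applying the long exact sequence in cohomology to the short exact sequence of Lemma \ref{lem: sqses} with $n=2$ (the vanishing $\HH^j(X,\mathcal{C}^2_\rho)=0$ makes the natural map $\HH^j(X,(\Omega^2_{X/k}\boxtimes \mathfrak{m}_A)/d\Omega^1_{X/k}\otimes_k\mathfrak{m}_A^0)\to \HH^j(X,\mathcal{R}^2_\rho/d\mathcal{R}^1_\rho)$ surjective, and injectivity is checked analogously in one degree lower). To prove the vanishing, I construct a short exact sequence $0\to F_1\to \mathcal{C}^2_\rho\to \mathcal{C}^2_\rho/F_1\to 0$ in which both outer terms are built from sheaves of the form $\mathcal{F}\otimes_k V$ with $V$ a finite-dimensional $k$-vector space and $\mathcal{F}$ a sheaf whose $j$-th cohomology vanishes under the stated hypotheses.

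Write $\mathcal{C}^2_\rho=N/Q$, where $N=N_0\oplus N_1$ with $N_0=\Omega^1_{X/k}\otimes_k\Omega^1_{A/k}$, $N_1=\mathcal{O}_X\otimes_k\Omega^2_{A/k}$, and $Q=\Omega^1_{X/k}\otimes_k d\mathfrak{m}_A+d(\mathcal{O}_X\otimes_k\Omega^1_{A/k})$. Take $F_1\subset \mathcal{C}^2_\rho$ to be the image of $N_1$. A direct chase using exactness of $\otimes_k$ gives $\mathcal{C}^2_\rho/F_1\cong (\Omega^1_{X/k}/d\mathcal{O}_X)\otimes_k (\Omega^1_{A/k}/d\mathfrak{m}_A)$. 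For the intersection $N_1\cap Q$, an element $(0,y)=(z,0)+(d_Xg,d_Ag)$ with $z\in \Omega^1_{X/k}\otimes_k d\mathfrak{m}_A$ forces $d_Xg\in \Omega^1_{X/k}\otimes_k d\mathfrak{m}_A$, so $N_1\cap Q=d_A(K)$ for $K=\ker(\mathcal{O}_X\otimes_k\Omega^1_{A/k}\to \Omega^1_{X/k}\otimes_k(\Omega^1_{A/k}/d\mathfrak{m}_A))$. Splitting $\Omega^1_{A/k}=d\mathfrak{m}_A\oplus W$ as $k$-vector spaces and using the crucial fact (from $X$ smooth, geometrically integral, and of characteristic zero) that $\ker(d\colon \mathcal{O}_X\to \Omega^1_{X/k})$ is the constant sheaf $k_X$, one computes $K=\mathcal{O}_X\otimes_k d\mathfrak{m}_A\oplus k_X\otimes_k W$, whence $d_A(K)=k_X\otimes_k d_A(\Omega^1_{A/k})$. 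Therefore $F_1=(\mathcal{O}_X\otimes_k\Omega^2_{A/k})/(k_X\otimes_k V')$ with $V'=d_A(\Omega^1_{A/k})\subset \Omega^2_{A/k}$.

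The cohomology bookkeeping is then routine. For $\mathcal{C}^2_\rho/F_1$, the short exact sequences $0\to k_X\to \mathcal{O}_X\to d\mathcal{O}_X\to 0$ and $0\to d\mathcal{O}_X\to \Omega^1_{X/k}\to \Omega^1_{X/k}/d\mathcal{O}_X\to 0$, combined with the flasqueness of $k_X$ on the irreducible space $X$ (so $\HH^i(X,k_X)=0$ for $i\geq 1$) and the assumed vanishings of $\HH^j$ and $\HH^{j+1}$ of $\mathcal{O}_X$ and $\Omega^1_{X/k}$, yield $\HH^j(X,\Omega^1_{X/k}/d\mathcal{O}_X)=0$ and hence $\HH^j(X,\mathcal{C}^2_\rho/F_1)=0$ by exactness of $\otimes_k$. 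For $F_1$, the sequence $0\to k_X\otimes_k V'\to \mathcal{O}_X\otimes_k\Omega^2_{A/k}\to F_1\to 0$ together with $\HH^j(X,\mathcal{O}_X)=0$ and $\HH^{j+1}(X,k_X)=0$ yields $\HH^j(X,F_1)=0$. The main obstacle is the computation of $N_1\cap Q$: establishing that no element of $N_1$ besides those obtained from the constant-sheaf summand of $K$ lies in $Q$ is what requires both the splitting of Lemma \ref{lem: splitting} and the identification of $\ker(d\colon \mathcal{O}_X\to \Omega^1_{X/k})$ as the constant sheaf; once this is in hand, the long exact sequence for $0\to F_1\to \mathcal{C}^2_\rho\to \mathcal{C}^2_\rho/F_1\to 0$ finishes the proof.
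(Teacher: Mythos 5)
Your proof of the main vanishing $\HH^j(X,\mathcal{C}^2_\rho)=0$ is correct, but it takes a genuinely different route from the paper. The paper resolves $\mathcal{C}^2_\rho$ from the left by the denominator: it splices the four-term sequence
\[0\rightarrow D\rightarrow \bigl(\Omega^1_{X/k}\otimes_k d\mathfrak{m}_A\bigr)\oplus d(\mathcal{O}_X\boxtimes\Omega^1_{A/k})\rightarrow N_0\oplus N_1\rightarrow \mathcal{C}^2_\rho\rightarrow 0,\]
identifies $D=d(\mathcal{O}_X)\otimes_k d\mathfrak{m}_A$ and $\HH^i(X,d(\mathcal{O}_X\boxtimes\Omega^1_{A/k}))\cong\HH^i(X,\mathcal{O}_X)\otimes_k\Omega^1_{A/k}$ via flasqueness of $\underline{k}$, and then runs two long exact sequences. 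You instead filter $\mathcal{C}^2_\rho$ by the image $F_1$ of the summand $\mathcal{O}_X\boxtimes\Omega^2_{A/k}$; your identifications $\mathcal{C}^2_\rho/F_1\cong(\Omega^1_{X/k}/d\mathcal{O}_X)\otimes_k(\Omega^1_{A/k}/d\mathfrak{m}_A)$ and $N_1\cap Q=\underline{k}\otimes_k d_A(\Omega^1_{A/k})$ check out (the computation of $N_1\cap Q$ via the splitting $\Omega^1_{A/k}=d\mathfrak{m}_A\oplus W$ and $\ker(d\colon\mathcal{O}_X\to\Omega^1_{X/k})=\underline{k}$ is exactly right, and the latter fact is the same one the paper uses to get $0\to\underline{k}\to\mathcal{O}_X\to d\mathcal{O}_X\to 0$). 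A pleasant byproduct of your decomposition is that it only consumes $\HH^j(X,\mathcal{O}_X)=\HH^{j+1}(X,\mathcal{O}_X)=\HH^j(X,\Omega^1_{X/k})=0$, whereas the paper's route also needs $\HH^{j+1}(X,\Omega^1_{X/k})=0$ to kill $\HH^{j+1}(X,K)$; this is consistent with the remark following the lemma in the proper case.

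The one place where your argument has a genuine problem is the deduction of the final identification from the vanishing. In the long exact sequence attached to
\[0\rightarrow\frac{\Omega^2_{X/k}\boxtimes\mathfrak{m}_A}{d\Omega^1_{X/k}\otimes_k\mathfrak{m}_A^0}\rightarrow\frac{\mathcal{R}^2_\rho}{d\mathcal{R}^1_\rho}\rightarrow\mathcal{C}^2_\rho\rightarrow 0,\]
the vanishing $\HH^j(X,\mathcal{C}^2_\rho)=0$ gives surjectivity, but your claim that injectivity is ``checked analogously in one degree lower'' would require $\HH^{j-1}(X,\mathcal{C}^2_\rho)=0$, which does not follow from the hypotheses: by your own computation that would need $\HH^{j-1}$ of $\mathcal{O}_X$ and $\Omega^1_{X/k}$ to vanish, and for instance $\HH^0(X,\mathcal{C}^2_\rho)$ is typically nonzero (it receives $\HH^0(X,\mathcal{O}_X)\otimes_k\Omega^2_{A/k}/d\Omega^1_{A/k}$). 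To get injectivity one must instead show that the connecting map $\HH^{j-1}(X,\mathcal{C}^2_\rho)\to\HH^j$ of the subsheaf vanishes (e.g.\ by producing a splitting of the displayed sequence compatible with the one in Lemma \ref{lem: splitting}); the paper is itself terse here, asserting only that the claim ``follows as a consequence of Lemma \ref{lem: sqses}'', but the specific substitute argument you propose is not available.
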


\begin{proof}
To prove the lemma we'll use the exact sequence
\[\begin{tikzcd}[column sep = small]
	0\arrow[r]
	&\arrow[d, phantom," "{coordinate, name = Z}] \left(\Omega_{X/k}^1\otimes_k d(\mathfrak{m}_A)\right) \cap d(\mathcal{O}_X\boxtimes\Omega_{A/k}^1)\rightarrow
	\left(\Omega_{X/k}^1\otimes_k d(\mathfrak{m}_A)\right) \oplus d(\mathcal{O}_X\boxtimes\Omega_{A/k}^1)\arrow[d, rounded corners,
	to path={  ([xshift=17ex]\tikztostart.south)
		|- (Z) [near end]\tikztonodes
		-| ([xshift=-2ex]\tikztotarget.west)
		-- (\tikztotarget)}] \\			 
	&\left(\Omega_{X/k}^1 \boxtimes \Omega_{A/k}^1\right)\oplus \left(\mathcal{O}_X\boxtimes \Omega^2_{A/k}\right) \longrightarrow
	\mathcal{C}^2_\rho \longrightarrow
	0 & & &
\end{tikzcd}\] where the second arrow is $(+,-)$ and the third is the sum $(a,b)\mapsto a+b$. We're going to identify the cohomology groups of each of these sheaves; then we'll patch together some long-exact cohomology sequences and deduce the result.

To simplify our notation, we write \[D=\left(\Omega_{X/k}^1\otimes_k d(\mathfrak{m}_A)\right) \cap d(\mathcal{O}_X\boxtimes \Omega_{A/k}^1)\] and we set $K$ to be the kernel of the third arrow. By splicing the four-term sequence above we get two short exact sequences
\begin{equation}\tag{S1}\label{eq: seq1}
0\rightarrow D\rightarrow \left(\Omega_{X/k}^1\otimes_k d(\mathfrak{m}_A)\right)\oplus d(\mathcal{O}_X\boxtimes \Omega_{A/k}^1)\rightarrow K \rightarrow 0
\end{equation}
and
\begin{equation}\tag{S2}\label{eq: seq2}
0\rightarrow K \rightarrow \left(\Omega_{X/k}^1 \boxtimes \Omega_{A/k}^1\right)\oplus \left(\mathcal{O}_X\boxtimes \Omega^2_{A/k}\right)\rightarrow \mathcal{C}_\rho^2\rightarrow 0.
\end{equation}

The first nontrivial term $D$ can be identified as \[D=\left(\Omega_{X/k}^1\otimes_k d(\mathfrak{m}_A)\right) \cap d(\mathcal{O}_X\boxtimes\Omega_{A/k}^1)=d(\mathcal{O}_X)\otimes_k d(\mathfrak{m}_A)\subset \Omega_{X/k}^1 \boxtimes \Omega_{A/k}^1.\] By \cite[Remark 2.9.1]{MR0463157} sheaf cohomology commutes with arbitrary (especially finite) direct sums, so this yields an isomorphism \begin{align*}\HH^i(X,D) &= \HH^i(X,d(\mathcal{O}_X)\otimes_k d(\mathfrak{m}_A))\\ &\cong\HH^i(X,d(\mathcal{O}_X))\otimes_k d(\mathfrak{m}_A).\end{align*}
Since $X$ is geometrically integral, hence irreducible, the constant sheaf $\underline{k}$ is flasque. From the long exact cohomology sequence associated to the short exact sequence \[0\rightarrow \underline{k} \rightarrow \mathcal{O}_X\xrightarrow{d} d(\mathcal{O}_X)\rightarrow 0\] this, in turn, implies that, for any $i\geq 1$, there is an isomorphism \[ \HH^i(X,d(\mathcal{O}_X))\cong \HH^i(X,\mathcal{O}_X)\]
since the higher cohomology of a flasque sheaf vanishes by \cite[Proposition 2.5]{MR0463157}. Altogether, this produces an isomorphism \[\HH^i(X,D)\cong \HH^i(X,\mathcal{O}_X)\otimes_k d(\mathfrak{m}_A)\] for any $i\geq 1$.

To compute the cohomology of the middle term in the sequence (\ref{eq: seq1}), we need to write $d(\mathcal{O}_X \boxtimes \Omega_{A/k}^1)$ in a way that allows us to compute its cohomology. 
But note that the differential \[\mathcal{O}_X\boxtimes \Omega^1_{A/k}\xrightarrow{d} \left(\Omega_{X/k}^1 \boxtimes \Omega_{A/k}^1\right)\oplus \left(\mathcal{O}_X\boxtimes \Omega^2_{A/k}\right)\] has kernel $\underline{k} \otimes_k L$ where $L=\ker(\Omega^1_{A/k}\rightarrow \Omega^2_{A/k})$. By an argument similar to before, noting that $\underline{k}\otimes_k L$ is also flasque, we have isomorphisms for all $i>0$ \begin{align*}
	\HH^i(X, d(\mathcal{O}_X\boxtimes \Omega^1_{A/k})) & \cong \HH^i(X,\mathcal{O}_X\boxtimes \Omega_{A/k}^1)\\ &\cong \HH^i(X, \mathcal{O}_X)\otimes_k \Omega_{A/k}^1.
\end{align*}

The long-exact cohomology sequence associated to (\ref{eq: seq1}) now breaks up into short exact sequences, for every $i\geq 1$, \begin{align*} 0\rightarrow \HH^i(X,\mathcal{O}_X)\otimes d(\mathfrak{m}_A)\rightarrow \left(\HH^i(X,\Omega_{X/k}^1)\otimes d(\mathfrak{m}_A)\right)\oplus \left(\HH^i(X,\mathcal{O}_X)\otimes \Omega_{A/k}^1\right)\\
\rightarrow\HH^i(X,K)\rightarrow 0.
\end{align*}
Our assumptions on the vanishing of the cohomology of $\mathcal{O}_X$ and $\Omega^1_{X/k}$ imply that both $\HH^j(X,K)$ and $\HH^{j+1}(X,K)$ vanish.

The long-exact sequence associated to (\ref{eq: seq2}) now shows that \[\HH^j(X,\mathcal{C}_\rho^2)\cong \left(\HH^j(X,\Omega_{X/k}^1)\otimes \Omega_{A/k}^1\right)\oplus \left(\HH^j(X,\mathcal{O}_X)\otimes \Omega_{A/k}^2\right).\] But both summands of this latter space vanish by assumption. Lastly, the claim that \[\HH^i(X,\R^2_\rho/d\R^1_\rho)=\HH^i(X,\left(\Omega_{X/k}^2\boxtimes \mathfrak{m}_A\right)/\left(d\Omega^1_{X/k}\otimes_k \mathfrak{m}_A^0\right))\] follows as a consequence of Lemma \ref{lem: sqses}.
\end{proof}

\begin{rmk}
If $X$ is a smooth, proper, and geometrically integral $k$-variety, then the map $\HH^*(X,\mathcal{O}_X)\rightarrow \HH^*(X,\Omega_{X/k}^1)$ induced by the differential vanishes (this follows from the degeneration of the Hodge to de Rham spectral sequence proved, for example, in \cite[Corollaire 2.7]{MR894379}). In this case, the above proof can be modified (with no assumptions on the vanishing of the cohomology of either $\mathcal{O}_X$ or $\Omega_{X/k}^1$) to show that \[\HH^i(X,\mathcal{C}_\rho^2)\cong \left(\HH^i(X,\Omega_{X/k}^1)\otimes_k \Omega_{A/k}^1/d(\mathfrak{m}_A)\right) \oplus \left(\HH^i(X,\mathcal{O}_X)\otimes_k \Omega_{A/k}^2/d(\Omega_{A/k}^1)\right)\] for any $i\geq 1$. Hence, if for some fixed $j\geq 1$ one has $\HH^j(X,\Omega^1_{X/k})=0$ and $\HH^j(X,\mathcal{O}_X)=0$, then $\HH^j(X,\mathcal{C}_\rho^2)=0$.
\end{rmk}

\begin{lem}\label{lem: van4}
	Suppose that $X$ is a smooth, proper, and geometrically connected $k$-variety. Fix two integers $p\geq j\geq 1$, and suppose also that the following conditions are satisfied:
	\begin{enumerate}
		\item $\HH^{p+q}(X,\Omega_{X/k}^{j-q-1})=0$ for all $0\leq q \leq j-1$
		\item and $\HH^{p+q}(X,\Omega_{X/k}^{j-q})=0$ for all $1\leq q \leq j$
	\end{enumerate}
	Then the canonical quotient map induces an isomorphism \[\HH^p(X,\Omega_{X/k}^j)\otimes_k \mathfrak{m}_A\cong \HH^p\left(X,\frac{\Omega_{X/k}^j\boxtimes \mathfrak{m}_A}{d\Omega^{j-1}_{X/k}\otimes_k \mathfrak{m}_A^0}\right)\] for any Artinian local $k$-algebra $(A,\mathfrak{m}_A)$ with $A/\mathfrak{m}_A\cong k$ as above.
\end{lem}

\begin{proof}
	We write \[\mathscr{K}^i=\ker\left( \Omega^i_{X/k}\xrightarrow{d^i} \Omega^{i+1}_{X/k}\right)\] for the given sheaf kernel and \[\mathscr{H}^i= \HH \left(\Omega_{X/k}^{i-1}\rightarrow \Omega_{X/k}^i\rightarrow \Omega_{X/k}^{i+1}\right)\] for the given homology sheaf, i.e.\ the quotient of $\mathscr{K}^i$ by the image sheaf of the first morphism. We don't use it but, the sheaf $\mathscr{H}^i$ can also be identified with the sheafification of the presheaf associating to an open $U\subset X$ the algebraic de Rham cohomology $\HH_{dR}^i(U)$.
	
	Using this notation, there are the following exact sequences:
	\[\tag{$D_0$}\label{eq: D0} 0\rightarrow \underline{k}\rightarrow \mathcal{O}_X\rightarrow d\mathcal{O}_X\rightarrow 0,\]
	\[\tag{$H_i$}\label{eq: Hi} 0\rightarrow d\Omega_{X/k}^{i-1}\rightarrow \mathscr{K}^i\rightarrow \mathscr{H}^i\rightarrow 0\quad \mbox{for } i\geq 1 ,\]
	\[\tag{$D_i$}\label{eq: Di} 0\rightarrow \mathscr{K}^i\rightarrow \Omega_{X/k}^i\rightarrow d\Omega_{X/k}^i\rightarrow 0 \quad \mbox{for } 1\leq i\leq j-1,\]
	\[\tag{$D_j$}\label{eq: dj} 0\rightarrow d\Omega_{X/k}^{j-1}\otimes_k \mathfrak{m}_A^0\rightarrow \Omega_{X/k}^j\boxtimes \mathfrak{m}_A\rightarrow \frac{\Omega_{X/k}^j\boxtimes \mathfrak{m}_A}{d\Omega^{j-1}_{X/k}\otimes_k \mathfrak{m}_A^0} \rightarrow 0.\]
	
	The long exact sequence associated to (\ref{eq: dj}) shows that, in order to prove the lemma, it suffices to show the simultaneous vanishing \[\HH^p(X,d\Omega_{X/k}^{j-1})=\HH^{p+1}(X,d\Omega_{X/k}^{j-1})=0.\]	Looking at the sequence (\ref{eq: Di}) when $i=j-1$, this is implied by the simultaneous vanishing \[\HH^p(X,\Omega^{j-1}_{X/k})=\HH^{p+1}(X,\mathscr{K}^{j-1})=0\]\[\mbox{and } \HH^{p+1}(X,\Omega_{X/k}^{j-1})=\HH^{p+2}(X,\mathscr{K}^{j-1})=0.\]
	
	The cohomology of $\Omega_{X/k}^{j-1}$ vanishes by assumption and, from the long exact sequence of cohomology associated with (\ref{eq: Hi}) when $i=j-1$, the vanishing of the cohomology of $\mathscr{K}^{j-1}$ is implied by the simultaneous vanishing \[\HH^{p+1}(X,d\Omega^{j-2}_{X/k})=\HH^{p+1}(X,\mathscr{H}^{j-1})=0\]
	\[\mbox{and }\HH^{p+2}(X,d\Omega^{j-2}_{X/k})=\HH^{p+2}(X,\mathscr{H}^{j-1})=0.\] According to \cite[Corollary 6.2]{MR412191}, the cohomology $\HH^a(X,\mathscr{H}^b)=0$ vanishes whenever $a>b$ (which is the case when $a=p$ and $b=j-1$ by assumption). The claim then follows from repeating this argument, eventually reducing to a computation of the terms of the long exact sequence associated to (\ref{eq: D0}).
\end{proof}

\begin{proof}[Proof of Theorem \ref{thm: Bloch3}]
The isomorphism \[\mathscr{T}_X^{j,3}(A)\cong\HH^j(X,\Omega^2_{X/k})\otimes_k \mathfrak{m}_A\] follows immediately from Lemmas \ref{lem: sheafify}, \ref{lem: sqses}, \ref{lem: vanc2}, and \ref{lem: van4}.

That $\mathscr{T}_X^{j,3}$ is pro-representable can be checked using Schlessinger's Criterion \cite[Theorem 2.11]{MR217093}. We'll show, instead, that, under the assumptions of the theorem statement, the functor $\mathscr{T}_X^{j,3}$ has a tangent-obstruction theory; the sufficiency of this condition to guarantee pro-representability is proved in \cite[Corollary 6.3.5]{MR2223408}. Specifically, we'll show that there exist finite dimensional $k$-vector spaces $T_1,T_2$ and an exact sequence \[\tag{T-O}\label{t-o} 0\rightarrow T_1\otimes_k J\rightarrow \mathscr{T}_X^{j,3}(A')\rightarrow \mathscr{T}_X^{j,3}(A)\rightarrow T_2\otimes_k J\] whenever there exists a small-extension \[0\rightarrow J\rightarrow A'\rightarrow A\rightarrow 0\] of local Artinian $k$-algebras $(A,\mathfrak{m}_A)$ and $(A',\mathfrak{m}_{A'})$ with residue fields isomorphic to $k$ (recall that $A'$ is a small extension of $A$ if $J\cdot \mathfrak{m}_{A'}=0$). It will be clear from the construction that the exact sequence (\ref{t-o}) is functorial in small-extensions, and this will complete the proof.

So, assume we're in the setting above with $A,A',J$ given. We'll see that $T_1\cong \HH^j(X,\Omega_{X/k}^2)$ and $T_2=0$. From the surjection $A'\rightarrow A$ we get a commutative ladder with exact rows:
\[\begin{tikzcd}
	0\arrow{r}&	\mathscr{T}_X^{j,3}(A') \arrow{r}\arrow{d} & \HH^i(X_{A'},\mathcal{K}^M_{3,X_{A'}}) \arrow{d}\arrow{r} & \HH^i(X,\mathcal{K}^M_{3,X})  \arrow{r}\arrow[equals]{d} & 0  \\ 0\arrow{r} &\mathscr{T}_X^{j,3}(A)\arrow{r} & \HH^i(X_A,\mathcal{K}^M_{3,X_A}) \arrow{r} & \HH^i(X,\mathcal{K}^M_{3,X}) \arrow{r}& 0
\end{tikzcd}\] In view of the isomorphisms \[\mathscr{T}_X^{j,3}(A')\cong\HH^j(X,\Omega^2_{X/k})\otimes_k \mathfrak{m}_{A'}\quad\mbox{and} \quad\mathscr{T}_X^{j,3}(A)\cong\HH^j(X,\Omega^2_{X/k})\otimes_k \mathfrak{m}_A,\] the leftmost vertical arrow is a surjection with kernel $\HH^j(X,\Omega_{X/k}^2)\otimes_k J$.
This immediately proves the theorem, by the comments above.
\end{proof}
\bibliographystyle{amsalpha}
\bibliography{bib}
\end{document}